\newtheorem{theorem}{Theorem}           
\newtheorem{lemma}[theorem]{Lemma}               
\newtheorem{corollary}[theorem]{Corollary}
\theoremstyle{definition}
\newtheorem{definition}[theorem]{Definition}
\newtheorem{example}[theorem]{Example}
\renewcommand{\Re}{\operatorname{Re}}
\renewcommand{\Im}{\operatorname{Im}}
\author{Anna Muranova}
 \address{Anna Muranova: IRTG 2235, 
University Bielefeld, Postfach 10 01 31, 
33501 Bielefeld, Germany} 
 \email{anna.muranova@gmail.com}
\title{On the notion of effective impedance}
\thanks{This research was supported by IRTG 2235 Bielefeld-Seoul ``Searching for the regular in the irregular:
Analysis of singular and random systems".}
\begin{document}

\maketitle



\begin{abstract}
It is known that electrical networks with resistors are related to the Laplace operator and random walk on weighted graphs. In this paper we consider more general electrical networks with coils, capacitors, and resistors. We give two mathematical models of such networks: complex-weighted graphs and graphs with weight from the ordered field of rational functions. The notion of effective impedance in both approaches is defined.
\end{abstract}

{\footnotesize
{\bf Keywords:} {weighted graphs; electrical network; Laplace operator; ordered field of rational functions} 
\smallskip

{\bf Mathematics Subject Classification 2010:}{ 05C22,  05C25, 34B45, 39A12, 12J15} 
}



\section{Introduction}

It was shown in \cite{DS} and \cite{LPW} that there is a tight relation between electrical networks with resistors and weighted graphs. Ohm's and Kirchhoff's laws imply that the voltage  in the network is  a solution of the Dirichlet problem for the discrete Laplace operator on the weighted graph. Due to the maximum principle, the solution of the Dirichlet problem in this case exists and is unique (see, for example, \cite{G}). Hence, this provides  a mathematical justification of the notion of effective resistance as the inverse energy of the solution of the Dirichlet problem.

Consider now an electrical network of alternating current, that consists of impe\-dan\-ces (i. e. resistors, capacitors, and coils). In this case one rewrites Ohm's and Kirchhoff's laws  in the complex form (see \cite{F}) and obtains the Dirichlet problem with complex-valued coefficients. Maximum principle does not exist in this case, and solution of the Dirichlet problem may not exist or may be not unique, which creates difficulties in definition of the effective impedance. In this paper we propose two approaches of overcoming this difficulty.

In the first approach we show that, in the case of multiple solutions, all they have the same energy and, therefore, the effective impedance is well-defined. In the case of absence of solution the effective impedance is set to be $0$.

In the second approach, we consider the impedance of each edge as a rational function of the parameter $\lambda=i\omega$, where $\omega$ is the frequency of the current (see \cite{Br}), and 
use the fact, that rational functions of $\lambda$ form an ordered field (see \cite{W}). Fortunately, the maximum principle holds for the Laplace operator with weight from that field, which allows to solve uniquely the Dirichlet problem and, hence, to define the effective impedance as a rational function on $\lambda$.

The two notions of effective impedances coincide if the Dirichlet problem of the first approach has a unique solution. Otherwise, the question of identity of the two effective impedances remains open.

\section{Graphs with complex-valued weight}

Let $(V,E)$ be a connected graph, where $V$ is a set of vertices and $E$ is a set of (unoriented) edges. Unless otherwise is said, the set $V$ is always assumed finite.

Assume that each edge $xy$ is equipped with a resistance $R_{xy}$, inductance $L_{xy}$, and capacitance $C_{xy}$, where $R_{xy}, L_{xy}\in [0,+\infty)$ and $C_{xy}\in (0,+\infty]$, which correspond to physical resistor, inductor (coil), and capacitor.  Let $a_0, a_1\in V$ be two vertices attached to the source of alternating current of frequency $\omega>0$. 

The \emph{impedance} of the edge $xy$ is 
\begin{equation}\label{imp}
z_{xy}=R_{xy}+ L_{xy}i\omega+\frac{1}{C_{xy}i\omega }.
\end{equation}
Note that $\Re z_{xy}\ge 0$.
\begin{figure}[H]
\includegraphics[scale=1.0]{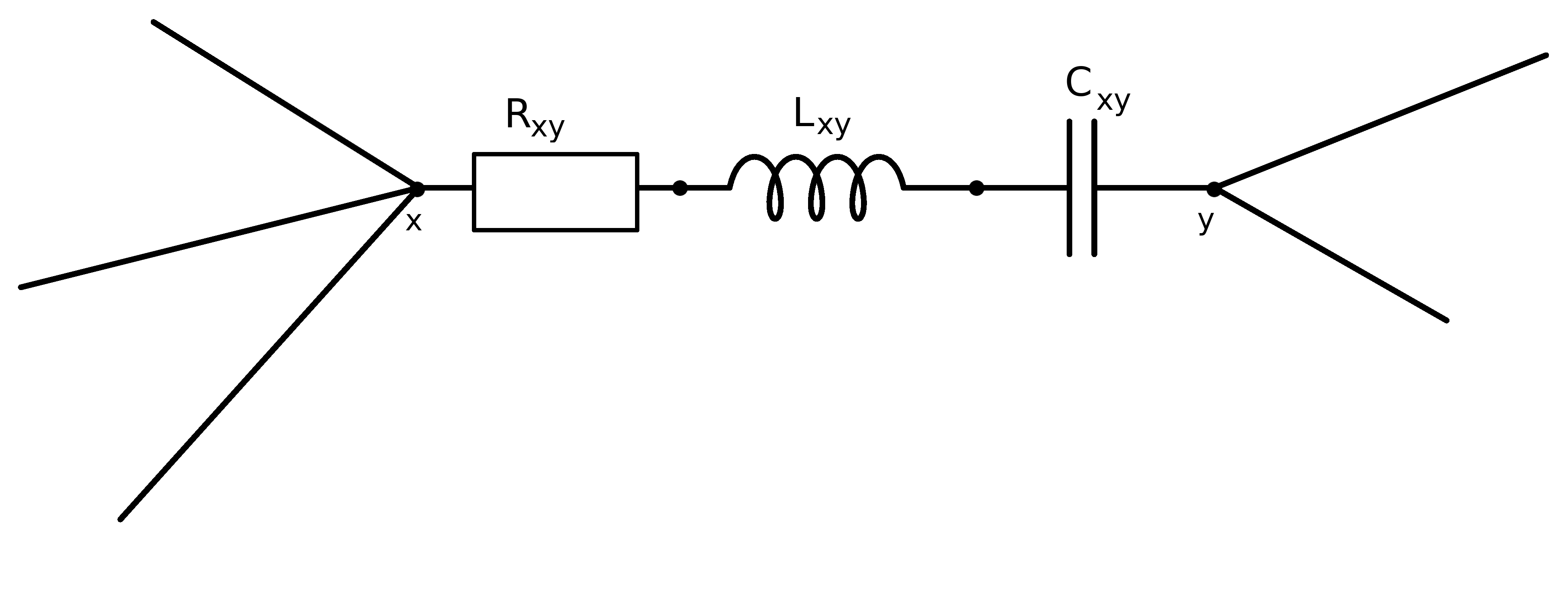}
\end{figure}

By complex Ohm's and Kirchhoff's laws the complex voltage $v:V\rightarrow \Bbb C$ satisfies
the following \emph{Dirichlet problem}:
\begin{equation}\label{GenDirpr}
 \begin{cases}
\sum_{y: y\sim x}\frac{v(y)-v(x)}{z_{xy}}=0 \mbox { on } V\setminus \{a_0,a_1\},
   \\
   v(a_0)=0,
   \\
 v(a_1)=1.
 \end{cases}
\end{equation}
Note that here and further in notations $\sum_y$ means $\sum_{y\in V}$. 

The physical voltage at time $t$ at the node $x$ is then $\Re(v(x)e^{i\omega t})$.

It will be convenient for us to use the inverse capacity:
\begin{equation*}
D_{xy}=\frac{1}{C_{xy}}\in  [0,+\infty),
\end{equation*}
as well as the \emph{admittance} $\rho_{xy}$:

\begin{align}
\rho_{xy}=&\frac{1}{z_{xy}}=\frac{i\omega}{ L_{xy}(i\omega)^2+R_{xy}(i\omega)+{D_{xy}}}=\frac{R_{xy}\omega^2+i(-L_{xy}\omega^3+D_{xy}\omega)}{(D_{xy}-L_{xy}\omega^2)^2+R^2_{xy}\omega^2}\label{rho}\\
=&\frac{\lambda}{ L_{xy}\lambda^2+R_{xy}\lambda+{D_{xy}}}\label{rhol},
\end{align}
where $\lambda=i\omega$ is pure imaginary, $\Im \lambda>0$ (i.e. $-i\lambda>0$). We always assume that for any edge 
\begin {equation*}
R^2_{xy}+L^2_{xy}+D^2_{xy}\ne 0.
\end {equation*}
Note that we can consider $\rho$ as a function from $V\times V$ to $\Bbb C$ by setting $\rho_{xy}=0$, if $xy$ is not an edge. We refer to the structure  $\Gamma=((V, E),\{\rho_{xy}\},a_0,a_1)$ as an \emph{(electrical) network}.

\begin{definition}
Define the \emph{Laplace operator} $\Delta_\rho$ as follows: for any function $f:V\rightarrow \Bbb C$
\begin{equation*}
\Delta_\rho f(x)=\sum_{y:y\sim x}(f(y)-f(x))\rho_{xy}=\sum_{y:y\sim x}(\nabla_{xy} f)\rho_{xy},
\end{equation*}
where
\begin{equation*}
\nabla_{xy} f=f(y)-f(x)
\end{equation*}
is a \emph{difference operator}.
\end{definition}
 Therefore, we can rewrite the Dirichlet problem \eqref{GenDirpr} as follows:
\begin{equation}\label{dirpr}
 \begin{cases}
\Delta_\rho f(x)=0 \mbox { on } V\setminus \{a_0,a_1\},
   \\
   v(a_0)=0, 
 \\
v(a_1)=1.
  
 \end{cases}
\end{equation}
Note that if $|V|=n$, then \eqref{dirpr} is a $n\times n$ system of linear equations. The existence and uniqueness of the solution of \eqref{dirpr} over $\Bbb C$ is not always the case.
\begin{example}\label{solutions}
Consider the network $\Gamma$ as at the figure below, where admittances are shown on each edge ($R,L,C>0$).

\begin{figure}[H]
\centering
\begin{tikzpicture}[auto,node distance=3cm,
                    thick,main node/.style={circle,draw,font=\sffamily\Large\bfseries}]

  \node[main node] (1) {$a_1$};
  \node[main node] (5) [above of=1]  {$z$};
  \node[main node] (4) [right of=5] {$y$};
  \node[main node] (3) [right of=4] {$x$};
  \node[main node] (2) [below of=3] {$a_0$};

  \path[every node/.style={font=\sffamily\small}]
    (2) edge node [bend right] {$\frac{1}{L \lambda +R}$} (1)
    (1) edge node [bend right] {${C \lambda }$} (5)
    (4) edge node [bend right] {$\frac{1}{L \lambda}$} (1)
    (5) edge node [bend right] {${C \lambda }$} (4)
    (3) edge node [bend right] {${C \lambda }$} (2)
    (2) edge node [bend right] {${C \lambda }$} (4)
    (4) edge node [bend right] {$\frac{1}{ L \lambda}$} (3);

\end{tikzpicture}
\end{figure}

The Dirichlet problem for this network is 
\begin{equation}\label{dirprex}
 \begin{cases}
\frac{(v(y)-v(x))}{L \lambda}+(v(a_0)-v(x)){C \lambda}=0,
\\
(v(z)-v(y)){C \lambda}+\frac{(v(a_1)-v(y))}{L \lambda}+(v(a_0)-v(y)){C \lambda}+\frac{(v(x)-v(y))}{L \lambda}=0,
   \\
(v(a_1)-v(z)){C \lambda}+(v(y)-v(z)){C \lambda}=0,
   \\
v(a_0)=0,\\
v(a_1)=1.
 \end{cases}
\end{equation}
The determinant of this linear system is 
\begin{equation*}
D=D(\lambda)=\frac{-C(3LC\lambda^2 +1)(LC\lambda^2+2)}{ L^2\lambda}
\end{equation*}
and it has two pure imaginary zeros with positive imaginary part: $\lambda_1=i\sqrt{\frac{2}{LC}}$, $\lambda_2=i\sqrt{\frac{1}{3LC}}$. In case $D(\lambda)\ne 0$ the solution of the Dirichlet problem \eqref{dirprex} is
\begin{equation*}
v=(v(x),v(y),v(z),v(a_0),v(a_1))=\left(\frac{1}{3LC\lambda^2+1},\frac{LC\lambda^2+1}{3LC\lambda^2+1},\frac{2LC\lambda^2+1}{3LC\lambda^2+1},0,1\right).
\end{equation*}
In the case $\lambda=\lambda_1$ the Dirichlet problem \eqref{dirprex} has infinitely many solutions
\begin{equation*}
v=\left(-2\tau+1,2\tau-1,\tau,0,1\right),\tau\in\Bbb C.
\end{equation*}
In the case $\lambda=\lambda_2$ the Dirichlet problem \eqref{dirprex} has no solution.

\end{example}

\begin{example}\label{nontrivial}
Consider the network $\Gamma$ as at the figure below, where admittances are shown on each edge.

\begin{figure}[H]
\centering
\begin{tikzpicture}[auto,node distance=3cm,
                    thick,main node/.style={circle,draw,font=\sffamily\Large\bfseries}]

  \node[main node] (1) {$a_0$};
  \node[main node] (2) [above right  of=1]  {$x$};
  \node[main node] (3) [below right of=1] {$y$};
  \node[main node] (4) [above right of=3] {$a_1$};

  \path[every node/.style={font=\sffamily\small}]
    (1) edge node [bend right] {$\frac{1}{\lambda}$} (2)
    (3) edge node [bend right] {$\lambda$} (1)
    (2) edge node [bend right] {$\lambda$} (4)
    (4) edge node [bend right] {$\frac{1}{\lambda }$} (3)
    (2) edge node [bend right] {$\frac{1}{\lambda+1 }$} (3);

\end{tikzpicture}
\end{figure}

The Dirichlet problem for this network is 
\begin{equation}\label{dirprex2}
 \begin{cases}
\frac{(v(a_0)-v(x))}{ \lambda }+\frac{(v(y)-v(x))}{ \lambda+1}+(v(a_1)-v(x)){ \lambda}=0,
   \\
(v(a_0)-v(y)){  \lambda }+\frac{(v(x)-v(y))}{ \lambda +1}+\frac{(v(a_1)-v(y))}{ \lambda }=0,\\

v(a_0)=0,\\
v(a_1)=1.
 \end{cases}
\end{equation}
The determinant of this linear system is 
\begin{align*}
D=D(\lambda)=&\frac{\lambda^5+\lambda^4+4\lambda^3+2\lambda^2+3\lambda+1}{\lambda^3+\lambda^2}\\=&\frac{(\lambda^2+1)(\lambda^3+\lambda^2+3\lambda+1)}{\lambda^2(\lambda+1)}
\end{align*}
and it has one pure imaginary zero, whose imaginary part is positive, $\lambda_0=i$. In case $D(\lambda)\ne 0$ the solution of the Dirichlet problem \eqref{dirprex2} is
\begin{align*}
v(x)&=\frac{\lambda^3+\lambda^2+\lambda}{\lambda^3+\lambda^2+3\lambda+1},\\
v(y)&=\frac{2 \lambda+1}{\lambda^3+\lambda^2+3\lambda+1}.
\end{align*}
In the case $\lambda=i$ the Dirichlet problem \eqref{dirprex2} has infinitely many solutions

\begin{align*}
v(x)&=\tau-1+i,\\
v(y)&=\tau,\tau\in\Bbb C.
\end{align*}

\end{example}

\begin{definition}\label{ZeffC}
Let $v(x)$ be a solution of the Dirichlet problem \eqref{dirpr}. Define the
\emph{effective impedance} of the network $\Gamma$ by 
\begin{equation*}
Z_{eff}=\frac{1}{\sum_{x:x\sim a_0}v(x)\rho_{xa_0}}
\end{equation*}
and the \emph{effective admittance} by 
\begin{equation*}
\mathcal{P}_{eff}=\frac{1}{Z_{eff}}={\sum_{x:x\sim a_0}v(x)\rho_{xa_0}}.
\end{equation*}
If  \eqref{dirpr} has no solution, then set $Z_{eff}=0$ and  $\mathcal{P}_{eff}=\infty$.
\end{definition}
Note that $Z_{eff}$ and $\mathcal{P}_{eff}$ take values in $\Bbb C\cup \{\infty\}$.
We will prove below (see Theorem \ref{Zeffmult}) , that in the case when \eqref{dirpr} has multiple solutions, the values $Z_{eff}$ and $\mathcal{P}_{eff}$ are independent of the choice of the solution $v$.

\begin{theorem}\label{nonzeroDet}
For any given network $\Gamma$ the determinant $D(\lambda)$ of the Dirichlet problem \eqref{dirpr} has a finite number of zeros in $\Bbb C$.
\end{theorem}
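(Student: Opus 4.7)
The plan is to view $D(\lambda)$ as a rational function of the formal variable $\lambda$ and show it is not the zero function, since any nonzero rational function has only finitely many zeros in $\mathbb{C}$.

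First, I would observe that every admittance $\rho_{xy}(\lambda)=\lambda/(L_{xy}\lambda^{2}+R_{xy}\lambda+D_{xy})$ is a rational function of $\lambda$ (with denominator not identically zero, by the assumption $R_{xy}^2+L_{xy}^2+D_{xy}^2\ne 0$). The coefficient matrix of the linear system \eqref{dirpr} has entries that are $\mathbb{Z}$-linear combinations of the $\rho_{xy}$'s (on the $|V|-2$ rows corresponding to interior vertices) together with the two rows $v(a_0)=0$, $v(a_1)=1$. Its determinant $D(\lambda)$ is therefore a rational function of $\lambda$.

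Next, I would exhibit a specific $\lambda^{*}$ at which $D(\lambda^{*})\ne 0$. The natural choice is any positive real $\lambda^{*}>0$: for such $\lambda^{*}$ the denominator $L_{xy}(\lambda^{*})^{2}+R_{xy}\lambda^{*}+D_{xy}$ is a nonnegative real number which, by the assumption $R_{xy}^2+L_{xy}^2+D_{xy}^2\ne 0$, is strictly positive, so $\rho_{xy}(\lambda^{*})$ is a strictly positive real number. At this value of $\lambda^{*}$, the system \eqref{dirpr} is precisely the classical Dirichlet problem for the discrete Laplace operator on a connected, real-positively-weighted graph with boundary $\{a_0,a_1\}$. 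By the classical maximum principle cited in the introduction (see \cite{G}), this Dirichlet problem has a unique solution, so its coefficient matrix is invertible and $D(\lambda^{*})\ne 0$.

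Since $D$ is a rational function in $\lambda$ that takes a nonzero value at $\lambda^{*}$, it is not identically zero. Writing $D(\lambda)=P(\lambda)/Q(\lambda)$ in lowest terms with $P,Q\in\mathbb{C}[\lambda]$, the zeros of $D$ in $\mathbb{C}$ are among the finitely many complex roots of the polynomial $P$, which proves the claim.

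The only real step of content is the nonvanishing of $D$ somewhere; the main obstacle is making sure that for real $\lambda>0$ we genuinely recover the classical real-weighted setting so that the known uniqueness result can be quoted. This is immediate from the formula \eqref{rhol} and the standing assumption that $R_{xy},L_{xy},D_{xy}$ do not all vanish.
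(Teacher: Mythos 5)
Your proof is correct, but it takes a genuinely different route from the paper's. The paper obtains Theorem \ref{nonzeroDet} as a corollary of Theorem \ref{ExUniq}: it treats $\lambda$ as a formal variable, regards the admittances as positive elements of the ordered field $\Bbb R(\lambda)$ (positivity being determined by the sign of the ratio of leading coefficients), proves a maximum principle over that field (Lemma \ref{minmax}), and concludes that the Dirichlet problem is uniquely solvable over $\Bbb R(\lambda)$, so the determinant is a nonzero element of $\Bbb R(\lambda)$. You instead specialize: you evaluate at a real $\lambda^{*}>0$, observe that all $\rho_{xy}(\lambda^{*})$ are then strictly positive reals (your check via $R_{xy}^2+L_{xy}^2+D_{xy}^2\ne 0$ is exactly the point that needs verifying), and quote the classical real-weighted maximum principle to get $D(\lambda^{*})\ne 0$; since evaluation at $\lambda^{*}$ commutes with taking the determinant, $D$ is not identically zero. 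Your argument is shorter and self-contained \textemdash{} it needs no ordered-field machinery \textemdash{} and in spirit it is the ``concrete shadow'' of the paper's proof, since the order on $\Bbb R(\lambda)$ used there is precisely the sign for large real $\lambda$. What the paper's heavier approach buys is not this theorem alone but the entire second framework: unique solvability of \eqref{dirprOF} \emph{in} $\Bbb R(\lambda)$, the bounds $0_K\preceq v\preceq 1_K$, and the definition of $Z_{eff}^{(2)}$ as a rational function, none of which follow from evaluation at a single point.
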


Hence, for all $\lambda\in \Bbb C$, except for a finite number of values, the Dirichlet problem \eqref{dirpr} has a unique solution.

The proof of Theorem \ref{nonzeroDet} will be given in Section 3. Note that the theorem is true just for networks (see Example \ref{minusLambda} below).

\begin{example}\label{minusLambda}
Consider the Dirichlet problem for the weighted graph at the figure below, where weights are shown on each edge. Note that here the weights of the edges $xa_1$ and $a_0y$ are not in the form \eqref{rhol}, and, therefore, this is {\it not a network}.
\begin{figure}[H]
\centering
\begin{tikzpicture}[auto,node distance=3cm,
                    thick,main node/.style={circle,draw,font=\sffamily\Large\bfseries}]

  \node[main node] (1) {$a_0$};
  \node[main node] (2) [above right  of=1]  {$x$};
  \node[main node] (3) [below right of=1] {$y$};
  \node[main node] (4) [above right of=3] {$a_1$};

  \path[every node/.style={font=\sffamily\small}]
    (1) edge node [bend right] {$\lambda$} (2)
    (3) edge node [bend right] {$-\lambda$} (1)
    (2) edge node [bend right] {$-\lambda$} (4)
    (4) edge node [bend right] {$\lambda$} (3)
    (2) edge node [bend right] {$1$} (3);

\end{tikzpicture}
\end{figure}

The Dirichlet problem for this graph is 
\begin{equation*}
 \begin{cases}
(v(a_0)-v(x))\lambda+(v(a_1)-v(x))(-\lambda)+(v(y)-v(x))=0,
   \\
(v(a_0)-v(y))(-\lambda)+(v(a_1)-v(y))\lambda+(v(x)-v(y))=0\\

v(a_0)=0,\\
v(a_1)=1.
 \end{cases}
\end{equation*}
The determinant of this system is 
\begin{equation*}
D=D(\lambda)\equiv 0
\end{equation*}
and the Dirichlet problem has infinitely many solutions
\begin{equation*}
v(x)=\tau, v(y)=\tau+\lambda,\tau\in\Bbb C
\end{equation*}
for any $\lambda$.
\end{example}

From physical point of view the effective impedance means that if we replace our entire network by a single edge connecting $a_0$ and $a_1$ with the impedance $Z_{eff}$, then the current in this single-edge network will be the same as in the original one.

\begin{lemma}[Green's formula]
Let $\Gamma$ be a network as above and let $\Omega$ be a non-empty subset of $V$. Then for any two functions $f, g:V\rightarrow \Bbb C$ the following identity is true:
\begin{equation}\label{Greenf}
\sum_{x\in \Omega}\Delta_\rho f(x){g(x)}=-\frac{1}{2}\sum_{x,y\in \Omega}(\nabla_{xy}f){(\nabla_{xy}g)}\rho_{xy}+\sum_{x\in \Omega}\sum_{y\in V\setminus\Omega}(\nabla_{xy}f){g(x)}\rho_{xy}.
\end{equation}

\end{lemma}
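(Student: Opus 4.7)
The plan is to establish \eqref{Greenf} by a direct symmetrization argument, exploiting two identities: $\rho_{xy}=\rho_{yx}$ (weights live on unoriented edges) and $\nabla_{xy}f=-\nabla_{yx}f$.

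First I would expand the left-hand side using the definition of $\Delta_\rho$. Since $\rho$ has been extended to $V\times V$ with $\rho_{xy}=0$ for non-adjacent pairs, the sum over $y\sim x$ can be replaced by a sum over all $y\in V$, giving
$$\sum_{x\in\Omega}\Delta_\rho f(x)\,g(x)=\sum_{x\in\Omega}\sum_{y\in V}(\nabla_{xy}f)\,g(x)\,\rho_{xy}.$$
Next I would split the inner sum according to whether $y\in\Omega$ or $y\in V\setminus\Omega$. The second piece reproduces the boundary term on the right-hand side of \eqref{Greenf} verbatim, so the only work is to transform the bulk double sum
$$S:=\sum_{x\in\Omega}\sum_{y\in\Omega}(\nabla_{xy}f)\,g(x)\,\rho_{xy}$$
into the claimed quadratic-form expression.

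The core step is to rename the dummy indices $x\leftrightarrow y$ in $S$ and apply the two symmetry identities above, which yields the alternative expression $S=-\sum_{x,y\in\Omega}(\nabla_{xy}f)\,g(y)\,\rho_{xy}$. Averaging the two representations of $S$ gives
$$S=\tfrac12\sum_{x,y\in\Omega}(\nabla_{xy}f)\bigl(g(x)-g(y)\bigr)\rho_{xy}=-\tfrac12\sum_{x,y\in\Omega}(\nabla_{xy}f)(\nabla_{xy}g)\,\rho_{xy},$$
which matches the interior term of \eqref{Greenf}, completing the argument.

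I do not expect any real obstacle here: the identity is purely combinatorial, and the diagonal terms $x=y$ are automatically harmless because $\nabla_{xx}f=0$. The only bookkeeping point to keep in mind is that extending the neighbour sums to all of $V$ is legitimate precisely because of the convention $\rho_{xy}=0$ for non-edges already fixed in the paper; everything else is rearrangement of a finite sum.
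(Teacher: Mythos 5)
Your argument is correct and is essentially the same as the paper's own proof: both expand $\Delta_\rho f$ over all $y\in V$, split into $y\in\Omega$ and $y\in V\setminus\Omega$, and symmetrize the bulk sum by swapping the dummy indices and averaging, using $\rho_{xy}=\rho_{yx}$ and $\nabla_{xy}f=-\nabla_{yx}f$. No gaps.
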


\begin{proof}
\begin{align*}
\sum_{x\in \Omega}\Delta_\rho f(x){g(x)}&=\sum_{x\in \Omega}\left(\sum_{y\in V}(f(y)-f(x))\rho_{xy}\right){g(x)}\\
&=\sum_{x\in \Omega}\sum_{y\in V}(f(y)-f(x)){g(x)}\rho_{xy}\\
&=\sum_{x\in \Omega}\sum_{y\in \Omega}(f(y)-f(x)){g(x)}\rho_{xy}+\sum_{x\in \Omega}\sum_{y\in V\setminus\Omega}(f(y)-f(x)){g(x)}\rho_{xy}\\
&=\sum_{y\in \Omega}\sum_{x\in \Omega}(f(x)-f(y)){g(y)}\rho_{xy}+\sum_{x\in \Omega}\sum_{y\in V\setminus\Omega}(\nabla_{xy}f){g(x)}\rho_{xy},\\
\end{align*}
where in the last line we have switched notation of the variables $x$ and $y$ in the first sum. Adding together the last two lines and dividing by $2$, we obtain \eqref{Greenf}.
\end{proof}

If $\Omega=V$, then $V\setminus\Omega$ is empty so that the last term in \eqref{Greenf} vanishes, and we obtain
\begin{equation}\label{VeqOm}
\sum_{x\in V}\Delta_\rho f(x){g(x)}=-\frac{1}{2}\sum_{x,y\in V}(\nabla_{xy}f)(\nabla_{xy}g)\rho_{xy}.
\end{equation}

\begin{corollary}
For any function $f:V\rightarrow \Bbb C$,
\begin{equation}\label{SumDeq0}
\sum_{x\in V}\Delta_\rho f(x)=0.
\end{equation}
\end{corollary}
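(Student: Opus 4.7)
The plan is to derive \eqref{SumDeq0} as an immediate corollary of \eqref{VeqOm} by specializing the test function. Specifically, I would apply \eqref{VeqOm} with $g \equiv 1$ (the constant function equal to $1$ on all of $V$). Then $\nabla_{xy} g = g(y) - g(x) = 0$ for all $x,y \in V$, so the right-hand side of \eqref{VeqOm} vanishes identically, while the left-hand side reduces to $\sum_{x\in V}\Delta_\rho f(x)$. This yields \eqref{SumDeq0}.

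As a sanity check and alternative argument, one could also verify \eqref{SumDeq0} directly from the definition of $\Delta_\rho$, using the symmetry $\rho_{xy}=\rho_{yx}$ (which follows from \eqref{rhol} since $R_{xy},L_{xy},D_{xy}$ depend only on the unoriented edge $xy$). Writing
\begin{equation*}
\sum_{x\in V}\Delta_\rho f(x)=\sum_{x\in V}\sum_{y\in V}(f(y)-f(x))\rho_{xy},
\end{equation*}
and swapping the names $x\leftrightarrow y$ in the double sum, one gets the negative of the original expression, so it must equal zero. This is really the same cancellation that drives the Green's formula derivation, so the one-line proof via $g\equiv 1$ is the cleanest presentation.

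There is no genuine obstacle here; the statement is essentially a restatement of skew-symmetry of the edge contributions. The only thing to be careful about is that the convention $\rho_{xy}=0$ when $xy\notin E$ is in force (as declared after \eqref{rhol}), so that extending the inner sum from $y\sim x$ to all $y\in V$ is legitimate and the application of \eqref{VeqOm} is valid without any case distinction at boundary or non-adjacent vertices.
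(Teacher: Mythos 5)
Your proof is correct and matches the paper's argument exactly: the paper also proves this corollary by applying \eqref{VeqOm} with $g\equiv 1$. The additional direct verification via the symmetry $\rho_{xy}=\rho_{yx}$ is a fine sanity check but not needed.
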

\begin{proof}
Apply \eqref{VeqOm} for $g\equiv 1$.
\end{proof}

\begin{lemma}
For any solution $v$ of the Dirichlet problem \eqref{dirpr} we have
\begin{equation}\label{PeffD}
\sum_{x:x\sim a_0}v(x)\rho_{xa_0}=\Delta_\rho v(a_0)=-\Delta_\rho v(a_1)=\frac{1}{2}\sum_{x,y\in V}(\nabla_{xy}v)(\nabla_{xy}u)\rho_{xy},
\end{equation}
where $u:V\rightarrow \Bbb C$ is any function such that $u(a_0)=0$ and $u(a_1)=1$.
\end{lemma}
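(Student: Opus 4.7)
The plan is to establish the chain of three equalities in \eqref{PeffD} separately, each being a short consequence of either the definition of $\Delta_\rho$, the corollary \eqref{SumDeq0}, or Green's formula \eqref{VeqOm}.

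First I would handle the equality $\sum_{x:x\sim a_0}v(x)\rho_{xa_0}=\Delta_\rho v(a_0)$. Expanding the definition of the Laplacian at $a_0$ gives
\[
\Delta_\rho v(a_0)=\sum_{y:y\sim a_0}(v(y)-v(a_0))\rho_{a_0 y},
\]
and since the boundary condition forces $v(a_0)=0$, this reduces immediately to $\sum_{y:y\sim a_0}v(y)\rho_{a_0 y}$ (renaming $y$ as $x$).

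Next, for the middle equality $\Delta_\rho v(a_0)=-\Delta_\rho v(a_1)$, I would use \eqref{SumDeq0} applied to $v$:
\[
\sum_{x\in V}\Delta_\rho v(x)=0.
\]
Since $v$ solves the Dirichlet problem \eqref{dirpr}, we have $\Delta_\rho v(x)=0$ at every $x\in V\setminus\{a_0,a_1\}$, so the whole sum collapses to $\Delta_\rho v(a_0)+\Delta_\rho v(a_1)=0$, giving the claimed identity.

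For the last equality, I would apply Green's formula in the form \eqref{VeqOm} with $f=v$ and $g=u$, where $u$ is any function with $u(a_0)=0$ and $u(a_1)=1$:
\[
\sum_{x\in V}\Delta_\rho v(x)\,u(x)=-\frac{1}{2}\sum_{x,y\in V}(\nabla_{xy}v)(\nabla_{xy}u)\rho_{xy}.
\]
Again, only $x=a_0$ and $x=a_1$ contribute to the left-hand side, which simplifies to $\Delta_\rho v(a_0)\cdot 0+\Delta_\rho v(a_1)\cdot 1=\Delta_\rho v(a_1)$. Rearranging yields $-\Delta_\rho v(a_1)=\frac{1}{2}\sum_{x,y\in V}(\nabla_{xy}v)(\nabla_{xy}u)\rho_{xy}$, completing the chain.

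There is no real obstacle: the lemma is essentially an integration-by-parts identity, and all three pieces follow mechanically from earlier results. The only subtle point is noticing that independence of the right-hand side from the particular choice of $u$ is automatic from the derivation, since the left-hand side $-\Delta_\rho v(a_1)$ does not involve $u$ at all.
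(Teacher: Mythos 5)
Your proposal is correct and follows essentially the same route as the paper: the first identity from $v(a_0)=0$, the second from \eqref{SumDeq0} combined with harmonicity of $v$ off $\{a_0,a_1\}$, and the third from Green's formula \eqref{VeqOm} with $g=u$. No gaps.
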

\begin{proof}
Using $v(a_0)=0$, we have
\begin{equation*}
\Delta_\rho v(a_0)=\sum_{x:x\sim a_0}(v(x)-v(a_0))\rho_{xa_0}=\sum_{x:x\sim a_0}v(x)\rho_{xa_0}
\end{equation*}
which proves the first identity in \eqref{PeffD}. Since by \eqref{SumDeq0} 
\begin{equation*}
\sum_{x\in V}\Delta_\rho f(x)=0
\end{equation*}
and $\Delta_\rho v(x)=0$ for all $x\in V\setminus\{a_0,a_1\}$, we obtain
\begin{equation*}
\Delta_\rho v(a_0)+\Delta_\rho v(a_1)=0
\end{equation*}
whence the second identity in \eqref{PeffD} follows.
Finally, to prove the third identity, we apply the Green's formula \eqref{VeqOm} and obtain 
\begin{equation*}
\frac{1}{2}\sum_{x,y\in V}(\nabla_{xy}v)(\nabla_{xy}u)\rho_{xy}=-\sum_{x}\Delta_\rho v(x)u(x)=-\Delta v(a_1),
\end{equation*}
because $\Delta_\rho v(x)=0$ for all $x\in V\setminus\{a_0,a_1\}$, while $u(a_0)=0$ and $u(a_1)=1$.
\end{proof}

\begin{theorem}\label{Zeffmult}
The value of the impedance $Z_{eff}$ does not depend on the choice of a solution $v$ of the Dirichlet problem \eqref{dirpr}. Besides, we have the identity
\begin{equation}\label{ccp}
\frac{1}{2}\sum_{x,y\in V}|\nabla_{xy} v|^2\rho_{xy}={\mathcal{P}_{eff}}
\end{equation}
(conservation of the complex power).
\end{theorem}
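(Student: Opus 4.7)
The plan is to exploit the flexibility of the auxiliary function $u$ in the third identity of \eqref{PeffD}. Combined with the first two identities in \eqref{PeffD}, it yields
\begin{equation*}
\mathcal{P}_{eff}(v) \;=\; \frac{1}{2}\sum_{x,y\in V}(\nabla_{xy}v)(\nabla_{xy}u)\rho_{xy}
\end{equation*}
for every solution $v$ of \eqref{dirpr} and every function $u:V\to\mathbb{C}$ satisfying $u(a_0)=0$ and $u(a_1)=1$, where I write $\mathcal{P}_{eff}(v)$ to emphasize the dependence on the chosen solution. This identity is the only tool I will need.

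For the independence statement, let $v_1$ and $v_2$ be two solutions of \eqref{dirpr}. Since each $v_j$ satisfies the boundary conditions, I can use $v_2$ as the test function $u$ in the identity applied to $v=v_1$, and use $v_1$ as $u$ in the identity applied to $v=v_2$. This gives
\begin{equation*}
\mathcal{P}_{eff}(v_1) = \tfrac{1}{2}\sum_{x,y\in V}(\nabla_{xy}v_1)(\nabla_{xy}v_2)\rho_{xy} = \tfrac{1}{2}\sum_{x,y\in V}(\nabla_{xy}v_2)(\nabla_{xy}v_1)\rho_{xy} = \mathcal{P}_{eff}(v_2),
\end{equation*}
where the middle equality is just commutativity of multiplication in $\mathbb{C}$. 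Hence $\mathcal{P}_{eff}$, and therefore $Z_{eff}$, depends only on $\Gamma$ and not on the particular solution $v$.

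For the conservation identity \eqref{ccp}, the key observation is that since $v(a_0)=0$ and $v(a_1)=1$ are real, the complex conjugate $\bar v$ satisfies $\bar v(a_0)=0$ and $\bar v(a_1)=1$, so $\bar v$ is an admissible choice of $u$ (note that $\bar v$ itself need not solve \eqref{dirpr}; it only needs to match the boundary data). Applying the displayed identity with $u=\bar v$ and using $\nabla_{xy}\bar v = \overline{\nabla_{xy}v}$ gives $(\nabla_{xy}v)(\nabla_{xy}\bar v)=|\nabla_{xy}v|^2$, and the identity becomes exactly \eqref{ccp}.

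There is no real obstacle, since the whole computational content sits inside \eqref{PeffD}; the only insight required is that the right-hand side of that identity is simultaneously (a) independent of $u$, which enables the symmetry argument by choosing $u$ to be another solution, and (b) valid for $u=\bar v$, which converts the bilinear form into the sesquilinear energy $\tfrac{1}{2}\sum|\nabla_{xy}v|^2\rho_{xy}$.
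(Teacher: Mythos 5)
Your proposal is correct and follows essentially the same route as the paper: both use the third identity of \eqref{PeffD} with $u=v_2$ (resp. $u=v_1$) plus commutativity to get independence of the solution, and then take $u=\bar v$ to obtain \eqref{ccp}. Your explicit remark that $\bar v$ need only match the boundary data, not solve \eqref{dirpr}, is a nice clarification but not a different argument.
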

\begin{proof}
Let $v_1$ and $v_2$ be two solutions of \eqref{dirpr}. By \eqref{PeffD} we have
\begin{equation*}
\sum_{x:x\sim a_0}v_1(x)\rho_{xa_0}=\frac{1}{2}\sum_{x,y\in V}(\nabla_{xy}v_1)(\nabla_{xy}v_2)\rho_{xy}
\end{equation*}
and also
\begin{equation*}
\sum_{x:x\sim a_0}v_2(x)\rho_{xa_0}=\frac{1}{2}\sum_{x,y\in V}(\nabla_{xy}v_2)(\nabla_{xy}v_1)\rho_{xy},
\end{equation*}
whence the identity 
\begin{equation*}
\sum_{x:x\sim a_0}v_1(x)\rho_{xa_0}=\sum_{x:x\sim a_0}v_2(x)\rho_{xa_0}
\end{equation*}
follows. Hence, the admittance and impedance are independent of the choice of $v$.
Applying \eqref{PeffD} with $u=\overline {v}$, we obtain
\begin{equation*}
\frac{1}{2}\sum_{x,y\in V}|\nabla_{xy} v|^2\rho_{xy}=\frac{1}{2}\sum_{x,y\in V}(\nabla_{xy} v)(\overline{\nabla_{xy} v})\rho_{xy}={\mathcal{P}_{eff}}.
\end{equation*}
\end{proof}

By the physical meaning $\Re z_{xy}\ge 0$ and the effective impedance also is expected to have a non-negative real part. 
We prove this in a following theorem, using the conservation of complex power.

\begin{theorem}
For any finite network, we have
\begin{equation*}
\Re (Z_{eff})\ge 0.
\end{equation*}
Moreover, if $\Im (z_{xy})\le 0$ for any $xy\in E$ ($RC$-network), then $\Im (Z_{eff})\le 0$ and if $\Im (z_{xy})\ge 0$ for any $xy\in E$ ($RL$-network), then $\Im (Z_{eff})\ge 0$.
\end{theorem}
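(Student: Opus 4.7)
The plan is to reduce everything to the conservation of complex power identity \eqref{ccp}, which already expresses $\mathcal{P}_{eff}$ as a sum of non-negative real coefficients $\frac{1}{2}|\nabla_{xy}v|^2$ times the complex weights $\rho_{xy}$. The sign information on $\Re$ and $\Im$ of $Z_{eff}$ will then be inherited from the corresponding signs of $\Re(\rho_{xy})$ and $\Im(\rho_{xy})$.

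First I would dispose of the trivial case: if the Dirichlet problem \eqref{dirpr} has no solution, then by Definition \ref{ZeffC} we have $Z_{eff}=0$, and all three inequalities hold tautologically. So assume a solution $v$ exists. Next I would relate the parts of $\rho_{xy}$ to those of $z_{xy}$: from $\rho_{xy}=1/z_{xy}=\overline{z_{xy}}/|z_{xy}|^2$ we get
\begin{equation*}
\Re(\rho_{xy})=\frac{\Re(z_{xy})}{|z_{xy}|^2}\ge 0,\qquad \Im(\rho_{xy})=-\frac{\Im(z_{xy})}{|z_{xy}|^2},
\end{equation*}
the non-negativity of $\Re(\rho_{xy})$ coming from $\Re(z_{xy})=R_{xy}\ge 0$ as observed after \eqref{imp}.

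Now I would apply Theorem \ref{Zeffmult}. Since $|\nabla_{xy}v|^2\ge 0$ is a real number, taking real and imaginary parts of \eqref{ccp} yields
\begin{equation*}
\Re(\mathcal{P}_{eff})=\frac{1}{2}\sum_{x,y\in V}|\nabla_{xy}v|^2\,\Re(\rho_{xy})\ge 0
\end{equation*}
and
\begin{equation*}
\Im(\mathcal{P}_{eff})=\frac{1}{2}\sum_{x,y\in V}|\nabla_{xy}v|^2\,\Im(\rho_{xy})=-\frac{1}{2}\sum_{x,y\in V}|\nabla_{xy}v|^2\,\frac{\Im(z_{xy})}{|z_{xy}|^2}.
\end{equation*}
Hence $\Im(\mathcal{P}_{eff})\ge 0$ in the $RC$-case and $\Im(\mathcal{P}_{eff})\le 0$ in the $RL$-case.

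Finally I would pass from $\mathcal{P}_{eff}$ to $Z_{eff}=1/\mathcal{P}_{eff}$. Writing $\mathcal{P}_{eff}=a+ib$, we have $Z_{eff}=(a-ib)/(a^2+b^2)$, so $\Re(Z_{eff})$ has the same sign as $\Re(\mathcal{P}_{eff})$, while $\Im(Z_{eff})$ has the opposite sign to $\Im(\mathcal{P}_{eff})$; this immediately gives the three required inequalities. I expect no real obstacle here; the only minor points are (i) to check the edge case $\mathcal{P}_{eff}=0$, where $Z_{eff}=\infty$ and the statement is understood in the obvious way, and (ii) to make sure the sign analysis is carried out consistently—in particular the sign flip between $\Im(z_{xy})$ and $\Im(\rho_{xy})$, which is exactly what makes an $RC$-network behave capacitively and an $RL$-network behave inductively at the level of the effective impedance.
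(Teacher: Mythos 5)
Your proposal is correct and follows essentially the same route as the paper: both arguments take real and imaginary parts of the conservation-of-complex-power identity \eqref{ccp}, use $|\nabla_{xy}v|^2\ge 0$ together with the signs of $\Re(\rho_{xy})$ and $\Im(\rho_{xy})$, and transfer the conclusion from $\mathcal{P}_{eff}$ to $Z_{eff}$ via the inversion formula $1/z=\bar z/|z|^2$. Your explicit treatment of the no-solution case and of $\mathcal{P}_{eff}=0$ is a small added care the paper leaves implicit, but the substance is identical.
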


\begin{proof}
For any $z\in \Bbb C$, we have
\begin{equation*}
\Re z\ge 0\Leftrightarrow \Re \left(\frac{1}{z}\right)\ge 0,
\end{equation*}
because if $z=a+bi, a,b\in \Bbb R$, then 
\begin{equation}\label{invZ}
\frac{1}{z}=\frac{a}{a^2+b^2}-\frac{b}{a^2+b^2}i.
\end{equation}
Therefore, $\Re (Z_{eff})\ge 0$ is equivalent to $\Re\left(\mathcal{P}_{eff}\right)\ge 0$. 
From the left hand side of \eqref{ccp}  it is obvious that $\Re\left(\mathcal{P}_{eff}\right)\ge 0$ since $\Re (\rho_{xy})\ge 0$ for any $xy\in E$ by \eqref{rho}.\\

 We have $\Im (z_{xy})\le 0\Leftrightarrow \Im (\rho_{xy})\ge 0$ and $\Im (Z_{eff})\le 0\Leftrightarrow \Im \left(\mathcal{P}_{eff}\right)\ge 0$ by \eqref{invZ}. Due to \eqref{ccp}, $\Im (\rho_{xy})\ge 0$, for any $xy\in E$, implies $\Im \left(\mathcal{P}_{eff}\right)\ge 0$. The result for $RL$-network can be proved analogously.
\end{proof}

\begin{example}
The effective impedance for the network from Example \ref{solutions} is given by. 
\begin{equation*}
Z_{eff}=
\begin{cases}
R+i\sqrt{\frac{2L}{C}}, \lambda=i\sqrt{\frac{2}{LC}}, \mbox { case of multiple solutions };
\\
0, \lambda=i\sqrt{\frac{1}{3LC}}, \mbox { case of no solution };
\\
\frac{3L^2C\lambda^3+3RLC\lambda^2+L\lambda+R}{L^2C^2\lambda^4+RLC^2\lambda^3+5LC\lambda^2+2RC\lambda+1}, \mbox { for other  }\lambda, \mbox {such that} (-i\lambda)>0.
\end{cases}
\end{equation*}
It is easy to verify, that in this example the effective impedance is a continuous function on $\omega=-i\lambda\in (0,\infty)$.
\end{example}

\begin{example}
The effective impedance for the network from Example \ref{nontrivial} is given by
\begin{equation*}
Z_{eff}=
\begin{cases}
\frac{1}{2}-\frac{i}{2}, \lambda=i,
\\
\frac{\lambda^3+\lambda^2+3 \lambda+1}{3\lambda^2+2\lambda+1}, \mbox { in other cases } (-i\lambda>0).
\end{cases}
\end{equation*}
It is easy to verify, that here the effective impedance is again a continuous function on $\omega=-i\lambda\in (0,\infty)$.
\end{example}
\section{Network over an ordered field}
Let us consider the admittance $\rho_{xy}$ as a rational function of $\lambda$
\begin{equation}\label{rAdm}
\rho_{xy}=\frac{\lambda}{ L_{xy} \lambda^2  +R_{xy}\lambda +{D_{xy}}}
\end{equation}
with real coefficients.

Let us denote by $\Bbb R(\lambda)$ the set of all rational functions of $\lambda$ with real coefficients.
\begin{definition}\cite{W}
Define in $\Bbb R(\lambda)$ an \emph{order} `` $\succ$'' as follows: for any rational function
\begin{equation*}
f(\lambda)=\frac{a_n \lambda^n+\dots+ a_1 \lambda+a_0}{b_m \lambda^m+\dots+ b_1 \lambda+b_0}\in \Bbb R(\lambda)
\end{equation*}
with $a_n\ne 0, b_m\ne 0$, write
\begin{equation*}
f(\lambda)\succ 0 \mbox {, if } \frac{a_n}{b_m}> 0.
\end{equation*}
and 
\begin{equation*}
f(\lambda)\succ g(\lambda) \mbox {, if } f(\lambda)-g(\lambda)\succ 0.
\end{equation*}

\end{definition}
It is easy to check that $\succ$  is a total order and $(\Bbb R(\lambda),\succ)$ is an ordered field (see \cite{W}). Note that this field is non-Archimedean: $\lambda\succ n$ for any $n=\underbrace{1+\cdots+1}_{n}$.

Let $(K,\succ)$ be an arbitrary ordered field. We say that $k\in K$ is \emph{positive} if $k\succ 0$. For $k_1,k_2\in K$ we will write
\begin{equation*}
k_1\succeq k_2 \mbox {, if } k_1\succ k_2 \mbox { or } k_1=k_2.
\end{equation*}
Moreover, we will write 
\begin{equation*}
k_1\prec k_2 \mbox {, if } k_2\succ k_1
\end{equation*}
and
\begin{equation*}
k_1\preceq k_2 \mbox {, if } k_1\prec k_2 \mbox { or } k_1=k_2.
\end{equation*}

\begin{definition}
 \emph{A network over the ordered field $K$} is a structure 
\begin{equation*}
\Gamma=((V,E),\{\rho_{xy}\},a_0,a_1),
\end{equation*}
where $(V,E)$ is a connected graph, $\rho:E\rightarrow K$ is a positive function, and $a_0,a_1\in V$ are two fixed vertices.

\end{definition}

Note that we can consider $\rho$ as a function from $V\times V$ to $K$ by setting $\rho_{xy}=0$, if $xy$ is not an edge. Assume that the graph $(V,E)$ is locally finite. Then the weight $\rho_{xy}$ gives rise to a function on vertices as follows:
\begin{equation}
\rho(x)=\sum_y \rho_{xy},
\end{equation}
where the notation $\sum\limits_y$ means $\sum\limits_{y\in V}$.
Then $\rho(x)$ is called the \emph{weight of a vertex} $x$. By properties of the ordered field, we have $\rho(x)\succ 0$ for any $x\in V$.

\begin{definition}
For any function $f:V \rightarrow K$ the \emph{Laplace operator} $\Delta_\rho$  is defined as 
\begin{equation*}
\Delta_\rho f(x)=\sum_y (f(y)-f(x))\rho_{xy}=\sum_y (\nabla_{xy}f)\rho_{xy},
\end{equation*}
where 
\begin{equation*}
\nabla_{xy}f=f(y)-f(x)
\end{equation*}
is a \emph{difference operator}.
\end{definition}

From now on we assume that $(V, E)$ is a finite graph and $\Gamma$ is a network over the ordered field $K$ on this graph.

\begin{theorem}\label{ExUniq}
The following \emph{Dirichlet problem}:
\begin{equation}\label{dirprOF}
 \begin{cases}
\Delta_\rho v(x)=0 \mbox { on } V\setminus\{a_0,a_1\},
   \\
   v(a_0)=0,
   \\
v(a_1)=1.
 \end{cases}
\end{equation}
where $v:V\rightarrow K$ is an unknown function, has always a unique solution.
\end{theorem}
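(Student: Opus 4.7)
The plan is to prove uniqueness via a maximum principle adapted to the ordered field $K$, and then deduce existence from uniqueness by an elementary linear-algebra argument. The key point is that, even though $K = \mathbb{R}(\lambda)$ is non-Archimedean, all that is needed for the classical Dirichlet argument is the \emph{order} structure and positivity of $\rho_{xy}$.

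First, I would establish the following maximum principle: if $f : V \to K$ satisfies $\Delta_\rho f(x) = 0$ for every $x \in V \setminus \{a_0, a_1\}$, then the maximum and minimum of $f$ (with respect to $\succeq$) are attained at some vertex in $\{a_0, a_1\}$. Because $V$ is finite and $\succ$ is a \emph{total} order, $M := \max_V f$ exists in $K$. Set $S = \{x \in V : f(x) = M\}$ and suppose, for contradiction, $S \subseteq V \setminus \{a_0, a_1\}$. Pick $x_0 \in S$; then
\begin{equation*}
0 = \Delta_\rho f(x_0) = \sum_{y \sim x_0} (f(y) - M)\rho_{x_0 y},
\end{equation*}
where each summand lies in $K$ with $(f(y) - M) \preceq 0$ and $\rho_{x_0 y} \succ 0$, so each summand is $\preceq 0$. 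In an ordered field, a sum of non-positive elements vanishes only if every summand is $0$; hence $f(y) = M$ for all neighbors $y$ of $x_0$. Iterating along a path in the connected graph $(V,E)$ from $x_0$ to $a_0$ yields $a_0 \in S$, contradicting the assumption. The minimum is handled symmetrically by applying the same argument to $-f$.

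Uniqueness then follows immediately: if $v_1, v_2$ both solve \eqref{dirprOF}, then $w = v_1 - v_2$ satisfies $\Delta_\rho w(x) = 0$ on $V \setminus \{a_0, a_1\}$ and $w(a_0) = w(a_1) = 0$; the maximum principle gives $\max_V w \preceq 0$ and $\min_V w \succeq 0$, so $w \equiv 0$. For existence, I would observe that \eqref{dirprOF} is a \emph{square} linear system over $K$: after substituting the boundary values $v(a_0) = 0$ and $v(a_1) = 1$ into the $|V| - 2$ equations $\Delta_\rho v(x) = 0$ for $x \in V \setminus \{a_0, a_1\}$, one obtains $|V| - 2$ linear equations in the $|V| - 2$ unknowns $\{v(x) : x \in V \setminus \{a_0, a_1\}\}$ with coefficient matrix $A$ over $K$. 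The uniqueness argument applied to the homogeneous problem (boundary data $0$ at both $a_0, a_1$) shows $\ker A = 0$, so $A$ is invertible over $K$ and the original system has a unique solution.

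The only genuinely delicate step is the transition from ``$\sum$ of non-positive summands equals $0$'' to ``each summand is $0$''. This is valid in any ordered field because the axioms force the set of positive elements to be closed under addition, so no cancellation between non-positive terms can occur; the non-Archimedean nature of $(\mathbb{R}(\lambda),\succ)$ plays no role. Everything else is standard discrete potential theory, transcribed verbatim from $\mathbb{R}$ to $K$.
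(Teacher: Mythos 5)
Your proposal is correct and follows essentially the same route as the paper: a maximum/minimum principle over the ordered field $K$ (proved by propagating the maximum set $S$ through neighbors via connectedness), uniqueness as an immediate consequence, and existence deduced from injectivity of the resulting square linear system over $K$. The only cosmetic difference is that the paper states the maximum principle for subharmonic functions on a general boundary set $B$, while you specialize to harmonic functions and $B=\{a_0,a_1\}$, which suffices here.
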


The key point for the proof of Theorem \ref{ExUniq} is the following lemma.
\begin{lemma}[A maximum/minimum principle]\label{minmax}
Let $B$ be a non-empty subset of $V$, such that $V\setminus B$ is also non-empty. Then, for any function $u:V\rightarrow K$, that satisfies $\Delta_\rho u(x)\succeq 0$ (i.e. $u$ is subharmonic) on $V\setminus B$, we have
\begin{equation}
\max_{V\setminus B} u\preceq \max_{B} u\
\end{equation}
and for any function $u:V\rightarrow K$, that satisfies $\Delta_\rho u(x)\preceq 0$ (i.e. $u$ is superharmonic) on $V\setminus B$, we have
\begin{equation}
\min_{V\setminus B} u \succeq \min_{B} u.
\end{equation}
\end{lemma}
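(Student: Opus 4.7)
The plan is to adapt the classical discrete maximum principle to the ordered-field setting. Since $V$ is finite and $K$ is totally ordered, the maximum $M=\max_V u$ exists in $K$, and the set $S=\{x\in V:u(x)=M\}$ is non-empty. The strategy is to show that whenever $x\in S\cap(V\setminus B)$, every neighbor of $x$ also lies in $S$; then connectedness of $(V,E)$ will force $S$ to meet $B$, yielding the inequality $\max_{V\setminus B} u \preceq \max_B u$.

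For the propagation step, fix $x\in S\cap(V\setminus B)$. Since $\Delta_\rho u(x)\succeq 0$ and $\rho_{xy}\succ 0$ for every $y\sim x$, while $u(y)-u(x)=u(y)-M\preceq 0$ by the definition of $M$, each summand $(u(y)-u(x))\rho_{xy}$ in $\Delta_\rho u(x)$ is $\preceq 0$. The auxiliary observation I will record is that in any ordered field, a sum $\sum_i a_i$ of elements with $a_i\preceq 0$ and $\sum_i a_i\succeq 0$ must have $a_i=0$ for each $i$: indeed, if some $a_j\prec 0$, then $\sum_{i\ne j}a_i=\sum_i a_i-a_j\succeq -a_j\succ 0$, contradicting that each remaining summand is $\preceq 0$. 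Applying this to the Laplacian sum gives $u(y)=u(x)=M$ for every $y\sim x$, so all neighbors of $x$ lie in $S$.

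Now I would argue by contradiction: suppose $\max_{V\setminus B}u\succ\max_B u$. Then the overall maximum $M$ is attained only in $V\setminus B$, so $S\subseteq V\setminus B$. Pick $x_0\in S$; by the propagation step all its neighbors are in $S\subseteq V\setminus B$, where $u$ is again subharmonic, so the same argument applies to them, and so on. Since $(V,E)$ is connected and finite, iterating gives $S=V$, whence $B\subseteq S$ and $\max_B u=M$, contradicting $\max_B u\prec M$. This proves the maximum principle, and the minimum principle follows by applying it to $-u$, using $K$-linearity of $\Delta_\rho$ and the fact that negation reverses the order in any ordered field.

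I expect no genuine obstacle: the only subtlety is to be explicit that the sign arithmetic used in the real-valued proof (sums of nonpositive terms, multiplication of positive by nonpositive, etc.) depends solely on the ordered-field axioms and so transfers verbatim to $(K,\succ)$, without any use of Archimedean property, completeness, or the specific structure of $\mathbb{R}(\lambda)$.
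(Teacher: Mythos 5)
Your proposal is correct and follows essentially the same route as the paper: assume $\max_{V\setminus B}u\succ\max_B u$, show that the set $S$ where the maximum is attained propagates to neighbors via subharmonicity, conclude $S=V$ by connectedness, and derive a contradiction, with the minimum principle obtained by passing to $-u$. The only cosmetic difference is that the paper normalizes by $\rho(x)$ and compares weighted averages, while you argue directly that a sum of nonpositive terms which is $\succeq 0$ must vanish termwise; both rest on the same ordered-field axioms.
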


\begin{proof}

It is enough to proof the first claim (then the second claim follows by changing $u$ to $-u$). 
Set 
\begin{equation*}
M  = \max_{V\setminus B} u,
\end{equation*}
and assume, that $M\succ \max_{B} u$. Let us consider the set
\begin{equation*}
S = \{x\in V:u(x)=M\}.
\end{equation*}
Clearly, $S\subset V\setminus B$ and $S$ is non-empty.\\

{\it Claim 1. If $x\in S$, then all neighbors of $x$ also belong to $S$.}

Indeed, we have  $\Delta_\rho u(x)\succeq 0$ which can be rewritten in the form
\begin{equation}\label{eqv(x)}
u(x)\preceq \sum_{y:y\sim x}\frac{\rho_{xy}}{\rho(x) }u(y).
\end{equation}
By properties of positive elements, we have
\begin{equation*}
\frac{\rho_{xy}}{\rho(x) }\succ 0 \mbox { for any } y\sim x.
\end{equation*}
Also, for any $y$ we have $u(y)\preceq M$ by the definition of maximum. Therefore,
\begin{equation}\label{1.1}
\frac{\rho_{xy}}{\rho(x) }u(y)=\frac{\rho_{xy}}{\rho(x) }M,\mbox{ if } u(y)=M,
\end{equation}
\begin{equation}\label{1.2}
\mbox{and }\frac{\rho_{xy}}{\rho(x) }u(y)\prec \frac{\rho_{xy}}{\rho(x) }M,\mbox{ if } u(y)\prec M,
\end{equation}
where the last line is true by properties of positive elements.
If there exist $y_0\sim x$ such that $u(y_0)\prec M$, then, summing up all the equalities \eqref{1.1} and inequalities \eqref{1.2}, we obtain
\begin{equation}\label{ineqv(x)}
\sum_{y\sim x}\frac{\rho_{xy}}{\rho(x) }u(y)\prec \sum_{y\sim x}\frac{\rho_{xy}}{\rho(x) } M.
\end{equation}
But 
\begin{equation*}
\sum_{y\sim x}\frac{\rho_{xy}}{\rho(x) } M=M=u(x), 
\end{equation*}
therefore, \eqref{ineqv(x)} is a contradiction with \eqref{eqv(x)}.\\

{\it Claim 2.} \emph{Let $S$ be a non-empty set of vertices of a connected graph (V,E) such that $x\in S$ implies that all neighbours of $x$ belong to $S$. Then $S=V$.}

Indeed, let $x\in S$ and $y$ be any other vertex. Then by the definition of connected graph, there is a path $\{x_k\}_{k=0}^n$ between $x$ and $y$, that is,
\begin{equation*}
x=x_0\sim x_1\sim x_2\sim\cdots\sim x_n=y.
\end{equation*}
Since $x_0\in S$ and $x_1\sim x_0$, we obtain $x_1\in S$. Since $x_2\sim x_1$, we obtain $x_2\in S$. By induction, we conclude that all $x_k\in S$, whence $y\in S$.\\

It follows from two claims that set $S$ must coincide with $V$, which is not possible since $u(x)\prec M$ for any $x\in B$. This contradiction shows that $M\preceq \max_{B} u$.
\end{proof}

\begin{proof}[Proof of the Theorem \ref{ExUniq}]

Let us first proof the uniqueness. If we have two solutions $v_1$ and $v_2$ of  \eqref{dirprOF},  then the difference $v=v_1-v_2$ satisfies the conditions
\begin{equation}
 \begin{cases}
\Delta_\rho v(x)=0 \mbox { on } V\setminus \{a_0,a_1\},
   \\
   v(x)=0 \mbox { on } \{a_0,a_1\},
   \\
 \end{cases}
\end{equation}
and, by Lemma \ref{minmax} 
\begin{equation*}
0=\max_{\{a_0,a_1\}} v \succeq \max_{V\setminus\{a_0,a_1\}} v \succeq  \min_{ V\setminus\{a_0,a_1\}} v\succeq \min_{\{a_0,a_1\}} v=0,
\end{equation*}
whence, $v\equiv 0$ since $v(a_0)=v(a_1)=0$.
Let us now prove the existence of a solution of \eqref{dirprOF}. For any $x\in V\setminus\{a_0,a_1\}$, rewrite the equation $\Delta_\rho v(x)=0$ in the form
\begin{equation}\label{Lv}
\sum_{\substack{y\sim x, \\  y\in V\setminus\{a_0,a_1\}}}\frac{\rho_{xy}}{\rho(x)}v(y)-v(x)=-\frac{\rho_{xa_1}}{\rho(x)}v(a_1)-\frac{\rho_{xa_0}}{\rho(x)}v(a_0).
\end{equation}
Let us denote by $\mathcal F$ the set of all functions $v$ on $V\setminus\{a_0,a_1\}$ with values in $K^{(n-2)}$, where $n=|V|$. Then the left hand side of \eqref{Lv} can be regarded as an operator in this space; let us denote it by $Lv$, that is
\begin{equation}
Lv(x)=\sum_{\substack{y\sim x,\\ y\in V\setminus\{a_0,a_1\}}}\frac{\rho_{xy}}{\rho(x)}v(y)-v(x),
\end{equation}
for all $x\in V\setminus\{a_0,a_1\}$. Rewrite the equation \eqref{Lv} in the form $Lv=h$, where $h$ is the right
hand side of \eqref{Lv}, which is a given function on $V\setminus\{a_0,a_1\}$. Note that $\mathcal F$ is a linear space over the field $K$.
Since the family $\{ {\bf 1}_{\{x\}}\}_{x\in V\setminus\{a_0,a_1\}}$ of indicator functions form a basis in $\mathcal F$, we obtain
that $\dim \mathcal F=n-2<\infty$. Hence, the operator $L:\mathcal F\rightarrow \mathcal F$ is a linear operator in
a finitely dimensional space, and the first part of the proof shows that $Lv = 0$ implies
$v= 0$ (indeed, just set $v(a_1) = 0$ and $v(a_0)=0$ in  \eqref{Lv}), that is, the operator $L$ is injective.
By Linear Algebra, any injective operator acting in the spaces of equal dimensions, must
be bijective. Hence, for any $h\in \mathcal F$ (in particular, for $h(x)=-\frac{\rho_{xa_1}}{\rho(x)}$), there is a solution, which finishes the proof.
\end{proof}

\begin{corollary}[Theorem \ref{nonzeroDet}]
For any given network $\Gamma$ the determinant $D(\lambda)$ of the Dirichlet problem \eqref{dirpr} has a finite number of zeros in $\Bbb C$.
\end{corollary}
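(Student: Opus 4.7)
The plan is to deduce the corollary directly from Theorem \ref{ExUniq} by viewing $D(\lambda)$ as a single element of the field $\mathbb{R}(\lambda)$ rather than as a family of complex numbers indexed by $\lambda$. First I would observe that, by \eqref{rAdm}, every admittance $\rho_{xy}$ is a rational function of $\lambda$ with real coefficients, so every entry of the coefficient matrix of the Dirichlet problem \eqref{dirpr} lies in $\mathbb{R}(\lambda)$. Consequently the determinant $D$ is a polynomial expression in these entries and is itself an element of $\mathbb{R}(\lambda)$, i.e. a rational function $D(\lambda)$ with real coefficients.

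Next I would check that the same matrix is the coefficient matrix of the Dirichlet problem \eqref{dirprOF} for the network $\Gamma$ interpreted as a network over the ordered field $K=\mathbb{R}(\lambda)$. For this I need to verify that each $\rho_{xy}$ given by \eqref{rAdm} is positive in $(\mathbb{R}(\lambda),\succ)$. This is a short check: the numerator has positive leading coefficient $1$, and the denominator $L_{xy}\lambda^{2}+R_{xy}\lambda+D_{xy}$ has its leading coefficient equal to whichever of $L_{xy}, R_{xy}, D_{xy}$ is the first nonzero one from the top; since all three are nonnegative and not all zero, this leading coefficient is strictly positive. Therefore $\rho_{xy}\succ 0$, and $\Gamma$ is indeed a network over $\mathbb{R}(\lambda)$ in the sense of the previous section.

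Now I would apply Theorem \ref{ExUniq}: the Dirichlet problem \eqref{dirprOF} has a unique solution $v\colon V\to \mathbb{R}(\lambda)$. In linear-algebra terms, this means the linear operator determined by \eqref{dirpr} on the finite-dimensional $\mathbb{R}(\lambda)$-vector space of functions on $V\setminus\{a_{0},a_{1}\}$ is injective, hence bijective, hence its matrix is invertible over $\mathbb{R}(\lambda)$. Therefore $D(\lambda)$ is a \emph{nonzero} element of $\mathbb{R}(\lambda)$.

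Finally, a nonzero rational function $D(\lambda)=P(\lambda)/Q(\lambda)$ with $P,Q\in \mathbb{R}[\lambda]$, $P\not\equiv 0$, has only finitely many zeros in $\mathbb{C}$, namely the roots of the numerator polynomial $P$. This gives the statement. The only subtle point—and the one I would be most careful about—is lining up the two matrices: one has to confirm that the matrix whose determinant is $D(\lambda)$ (in the complex Dirichlet problem \eqref{dirpr}) agrees, entry by entry in $\mathbb{R}(\lambda)$, with the matrix of the operator $L$ used in the proof of Theorem \ref{ExUniq}, so that injectivity in the ordered-field setting really forces $D(\lambda)\not\equiv 0$. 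Example \ref{minusLambda} is a useful warning here: without the constraint that the weights have the specific form \eqref{rAdm} (which ensures positivity in $\mathbb{R}(\lambda)$), Theorem \ref{ExUniq} may fail and $D(\lambda)$ can be identically zero.
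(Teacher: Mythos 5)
Your proposal is correct and follows essentially the same route as the paper: the paper's proof is exactly the observation that $D(\lambda)$ is a rational function of $\lambda$ which, by Theorem \ref{ExUniq} applied over the ordered field $\mathbb{R}(\lambda)$, cannot be identically zero. You simply spell out the details (positivity of $\rho_{xy}$ in $(\mathbb{R}(\lambda),\succ)$ and the identification of the two coefficient matrices) that the paper leaves implicit.
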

\begin{proof}
For any given network the determinant of the Dirichlet problem \eqref{dirpr} is a rational function on $\lambda$ and, by Theorem \ref{ExUniq}, it is not constantly zero.
\end{proof}

\begin{corollary}
For the solution $v:V\rightarrow K$ of \eqref{dirprOF} the following inequality
\begin{equation}\label{v01}
0_K\preceq v(x) \preceq 1_K
\end{equation}
is true for any $x\in V$.
\end{corollary}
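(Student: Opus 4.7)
The plan is to apply Lemma \ref{minmax} (the maximum/minimum principle) directly with $B = \{a_0, a_1\}$. Since $v$ solves \eqref{dirprOF}, it satisfies $\Delta_\rho v(x) = 0$ on $V \setminus B$, so $v$ is simultaneously subharmonic and superharmonic on $V \setminus B$. The boundary data $v(a_0) = 0_K$ and $v(a_1) = 1_K$ give $\max_B v = 1_K$ and $\min_B v = 0_K$.

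Applying the subharmonic inequality of Lemma \ref{minmax}, we obtain
\begin{equation*}
\max_{V \setminus B} v \preceq \max_B v = 1_K,
\end{equation*}
and applying the superharmonic inequality,
\begin{equation*}
\min_{V \setminus B} v \succeq \min_B v = 0_K.
\end{equation*}
Combined with the fact that on $B$ itself the values $0_K$ and $1_K$ already lie in $[0_K, 1_K]$, this gives $0_K \preceq v(x) \preceq 1_K$ for every $x \in V$.

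There is no real obstacle here: the result is essentially an immediate corollary of the maximum/minimum principle once one recognizes that a harmonic function is both sub- and superharmonic. The only thing worth being careful about is to make sure the hypotheses of Lemma \ref{minmax} are fulfilled, namely that both $B = \{a_0, a_1\}$ and $V \setminus B$ are non-empty. The first is clear; the second follows from the implicit assumption that $V$ contains interior vertices (otherwise $V = \{a_0, a_1\}$ and the claim is trivial from the boundary conditions alone).
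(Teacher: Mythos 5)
Your proof is correct and is exactly the paper's argument: the paper's proof consists of the single line ``Apply Lemma \ref{minmax} for $B=\{a_0,a_1\}$,'' and you have simply spelled out the details, including the (reasonable) check that $V\setminus B$ is non-empty.
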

\begin{proof}
Apply Lemma \ref{minmax}  for  $B=\{a_0,a_1\}$.
\end{proof}
Now we can define the effective impedance of the network over the ordered field $K$.
\begin{definition}\label{ZeffOF}
Let $v$ be a solution of the Dirichlet problem \eqref{dirprOF} for the network. Then define the \emph{effective impedance} by
\begin{equation}\label{Yeff0} 
Z_{eff}=\frac{1}{\sum_x v(x)\rho_{xa_0}},
\end{equation}
The quantity 
\begin{equation*}
\mathcal{P}_{eff}=\frac{1}{Z_{eff}}={\sum_x v(x)\rho_{xa_0}}
\end{equation*}
is called the \emph {effective admittance}.
\end{definition}

Since by Theorem \ref{ExUniq} the Dirichlet problem \eqref{dirprOF} has exactly one solution over the field $K$, the effective impedance is always well-defined.

Moreover, by \eqref{v01} we have  $\mathcal{P}_{eff}\succeq 0$ and, hence, $Z_{eff}\succeq 0$.

\begin{theorem}[Green's formula]
Let $\Gamma$ be a network over the ordered field $K$ with the vertex set $V$, and let $\Omega$ be a non-empty subset of $V$. Then, for any two functions $f,g$ on $V$,
\begin{equation}\label{GreenfOF}
\sum_{x\in \Omega}\Delta_\rho f(x)g(x)=-\frac{1}{2}\sum_{x,y\in \Omega}(\nabla_{xy}f)(\nabla_{xy}g)\rho_{xy}+\sum_{x\in \Omega}\sum_{x\in V\setminus\Omega}(\nabla_{xy}f)g(x)\rho_{xy}.
\end{equation}
If $\Omega=V$, then the last term in \eqref{GreenfOF} vanishes, and we obtain
\begin{equation}\label{VeqOmOF}
\sum_{x\in V}\Delta_\rho f(x)g(x)=-\frac{1}{2}\sum_{x,y\in V}(\nabla_{xy}f)(\nabla_{xy}g)\rho_{xy}
\end{equation} 
\end{theorem}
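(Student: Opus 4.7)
The plan is to mimic verbatim the proof of the Green's formula already given in Section~2 for complex weights, since that proof uses nothing beyond the field axioms, the symmetry $\rho_{xy}=\rho_{yx}$, and the convention $\rho_{xy}=0$ when $xy\notin E$. All of these are available in the ordered-field setting, so the manipulations transfer line-by-line.

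Concretely, I would start from the definition
\begin{equation*}
\sum_{x\in\Omega}\Delta_\rho f(x)g(x)=\sum_{x\in\Omega}\sum_{y\in V}(f(y)-f(x))g(x)\rho_{xy}
\end{equation*}
and split the inner sum over $V$ into the parts $y\in\Omega$ and $y\in V\setminus\Omega$. The $V\setminus\Omega$ part is already in the desired boundary form $\sum_{x\in\Omega}\sum_{y\in V\setminus\Omega}(\nabla_{xy}f)g(x)\rho_{xy}$. For the $\Omega$ part, I would relabel the summation variables $x\leftrightarrow y$ in one copy, use $\rho_{xy}=\rho_{yx}$, and then average the two expressions; this symmetrisation produces the factor $-\tfrac12$ and converts $(f(y)-f(x))g(x)$ into $(\nabla_{xy}f)(\nabla_{xy}g)$. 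Adding the two contributions yields \eqref{GreenfOF}.

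For the case $\Omega=V$, the set $V\setminus\Omega$ is empty, so the boundary sum is empty and \eqref{VeqOmOF} follows immediately from \eqref{GreenfOF}.

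There is essentially no obstacle: no maximum principle, no topology, and no order is used in the argument, only the commutative-ring identities in $K$ and the symmetry of $\rho$. The only point worth verifying is that the relabelling and symmetrisation step is formally valid when $x=y$, but since $\nabla_{xx}f=0$ those diagonal terms contribute zero to both sides, so the argument is unaffected.
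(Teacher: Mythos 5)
Your proposal is correct and coincides with the paper's own proof, which carries out exactly this splitting of the inner sum over $\Omega$ and $V\setminus\Omega$, the relabelling $x\leftrightarrow y$ using the symmetry of $\rho$, and the averaging that produces the factor $-\tfrac12$. The only detail the paper makes explicit that you gloss over is that dividing by $2$ is legitimate because an ordered field has characteristic $0$ --- so the order \emph{is} used, if only to rule out characteristic $2$.
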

\begin{proof}
\begin{align*}
\sum_{x\in \Omega}\Delta_\rho f(x)&g(x)=\sum_{x\in \Omega}\left(\sum_{y\in V}(f(y)-f(x))\rho_{xy}\right)g(x)\\
&=\sum_{x\in \Omega}\sum_{y\in V}(f(y)-f(x))g(x)\rho_{xy}\\
&=\sum_{x\in \Omega}\sum_{y\in \Omega}(f(y)-f(x))g(x)\rho_{xy}+\sum_{x\in \Omega}\sum_{y\in V\setminus\Omega }(f(y)-f(x))g(x)\rho_{xy}\\
&=\sum_{y\in \Omega}\sum_{x\in \Omega}(f(x)-f(y))g(y)\rho_{xy}+\sum_{x\in \Omega}\sum_{y\in V\setminus\Omega}(\nabla_{xy}f)g(x)\rho_{xy},\\
\end{align*}
where in the last line we have switched notation of the variables $x$ and $y$ in the first sum. Adding together the last two lines and dividing by $2$ (it is possible, since any ordered field has characteristic 0, see \cite{W}), we obtain
$$
\sum_{x\in \Omega}\Delta_\rho f(x)g(x)=-\frac{1}{2}\sum_{x\in \Omega}\sum_{y\in \Omega}(\nabla_{xy}f)(\nabla_{xy}g)\rho_{xy}+\sum_{x\in \Omega}\sum_{y\in V\setminus\Omega }(\nabla_{xy}f)g(x)\rho_{xy},
$$
which was to be proved.
\end{proof}

\begin{corollary}
For any function $f:V\rightarrow K$,
\begin{equation}\label{SumDeq0OF}
\sum_{x\in V}\Delta_\rho f(x)=0.
\end{equation}
\end{corollary}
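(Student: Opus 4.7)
The plan is to mimic exactly the proof of the analogous corollary \eqref{SumDeq0} in the complex-valued setting: apply the Green's formula \eqref{VeqOmOF} with the constant function $g \equiv 1_K$. Since the difference operator kills constants, i.e. $\nabla_{xy} g = 1_K - 1_K = 0$ for all $x,y \in V$, the entire right-hand side of \eqref{VeqOmOF} collapses to $0_K$. The left-hand side reduces to $\sum_{x \in V} \Delta_\rho f(x) \cdot 1_K = \sum_{x \in V} \Delta_\rho f(x)$, which yields \eqref{SumDeq0OF}.

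The only thing one might want to double-check is that the constant function $1_K$ indeed lies in the class of functions $V \to K$ to which Green's formula applies; this is immediate, since $K$ is a field and $1_K \in K$. Note also that no division is required in this step, so the characteristic-zero assumption used inside the proof of Green's formula itself is not an additional concern here — we simply quote \eqref{VeqOmOF} as already established.

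There is essentially no obstacle: the argument is a one-line specialization of the Green's formula just proved. The only subtlety worth pointing out (implicit in the complex case as well) is that \eqref{VeqOmOF} is stated for arbitrary $K$-valued functions on $V$, so applying it with $g \equiv 1_K$ is legitimate without any positivity or order hypotheses on $g$.
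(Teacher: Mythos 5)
Your proof is correct and is exactly the paper's argument: the paper likewise proves \eqref{SumDeq0OF} by applying \eqref{VeqOmOF} with $g\equiv 1$. The additional remarks about $\nabla_{xy}g=0$ and the legitimacy of choosing a constant $g$ are sound but just spell out what the paper leaves implicit.
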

\begin{proof}
Apply \eqref{VeqOmOF} for $g\equiv 1$.
\end{proof}

\begin{lemma}\label{PeffDOF}
For any network we have
\begin{equation}\label{DeltaPor}
\mathcal{P}_{eff}=\Delta_\rho v(a_0)=-\Delta_\rho v(a_1),
\end{equation}
where $v$ is the solution of the Dirichlet problem \eqref{dirprOF}.
\end{lemma}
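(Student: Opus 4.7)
The plan is to mirror the complex-weighted proof of equation \eqref{PeffD}, which goes through unchanged over an ordered field since it uses only algebraic manipulations (no analytic notions like modulus or complex conjugation).

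First I would establish the identity $\mathcal{P}_{eff} = \Delta_\rho v(a_0)$ directly from the definitions. Since $v(a_0) = 0$, we have
\begin{equation*}
\Delta_\rho v(a_0) = \sum_{y:y\sim a_0}(v(y)-v(a_0))\rho_{a_0 y} = \sum_{y} v(y)\rho_{y a_0} = \mathcal{P}_{eff},
\end{equation*}
using the fact that $\rho_{xy}$ is symmetric in the indices and vanishes for non-adjacent pairs, so the sum over neighbors of $a_0$ equals the sum over all $y \in V$.

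Next I would obtain the second equality $\Delta_\rho v(a_0) = -\Delta_\rho v(a_1)$ by invoking the corollary \eqref{SumDeq0OF}: for any function $f: V \to K$, $\sum_{x \in V} \Delta_\rho f(x) = 0$. Applied to the solution $v$ of the Dirichlet problem \eqref{dirprOF}, we have $\Delta_\rho v(x) = 0$ for every $x \in V \setminus \{a_0,a_1\}$, so all terms in the sum $\sum_{x \in V}\Delta_\rho v(x) = 0$ vanish except those at $a_0$ and $a_1$, giving $\Delta_\rho v(a_0) + \Delta_\rho v(a_1) = 0$.

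There is no real obstacle here: the argument is a direct transcription of the proof of equation \eqref{PeffD} in the complex-weighted setting, and every tool it uses (the Green's formula \eqref{VeqOmOF}, its corollary \eqref{SumDeq0OF}, and the uniqueness of $v$ from Theorem \ref{ExUniq}) is already available for networks over an ordered field $K$. The only thing worth checking is that the manipulations rely only on the field axioms and not on any completeness or analytic property of $\mathbb{C}$, which they do.
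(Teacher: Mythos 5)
Your proof is correct and follows essentially the same route as the paper: the first identity comes directly from $v(a_0)=0$ and the definition of $\mathcal{P}_{eff}$, and the second from the corollary \eqref{SumDeq0OF} together with harmonicity of $v$ on $V\setminus\{a_0,a_1\}$. Your added remark that the argument uses only field axioms (no conjugation or completeness) is a fair observation but not needed beyond what the paper already records.
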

\begin{proof}
Using $v(a_0)=0$, we obtain
\begin{equation}
\Delta_\rho v(a_0)=\sum_{y:y\sim a_0}(v(y)-v(a_0))\rho_{a_0y}=\sum_{y:y\sim a_0}v(y)\rho_{a_0y}=\mathcal{P}_{eff}.
\end{equation}
The second equality in \eqref{DeltaPor} follows from \eqref{SumDeq0OF}, since $v$ is the solution of the Dirichlet problem \eqref{dirprOF}.
\end{proof}

\begin{theorem}[Conservation of power over the ordered field]
Let $v$ be the solution of the Dirichlet problem \eqref{dirprOF} for network $\Gamma$ over the ordered field $K$. Then
\begin{equation}\label{cpOF}
\frac{1}{2}\sum_{x,y\in V}(\nabla_{xy} v)^2\rho_{xy}=\mathcal{P}_{eff}.
\end{equation}
\end{theorem}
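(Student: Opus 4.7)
The plan is to apply the Green's formula \eqref{VeqOmOF} directly with $f=g=v$, where $v$ is the unique solution of the Dirichlet problem \eqref{dirprOF} over $K$. This mirrors the strategy used in Theorem \ref{Zeffmult} for the complex case, but since we are now working in an ordered field rather than $\mathbb{C}$, no complex conjugation is needed and the role formerly played by $|\nabla_{xy}v|^2$ is played by $(\nabla_{xy}v)^2$.

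First, applying \eqref{VeqOmOF} with $f=g=v$ gives
\begin{equation*}
\sum_{x\in V}\Delta_\rho v(x)\,v(x)=-\frac{1}{2}\sum_{x,y\in V}(\nabla_{xy}v)^2\rho_{xy}.
\end{equation*}
Next, I would simplify the left-hand side using the boundary conditions: since $v$ solves \eqref{dirprOF}, one has $\Delta_\rho v(x)=0$ for every $x\in V\setminus\{a_0,a_1\}$, so only the terms at $a_0$ and $a_1$ survive in the sum. Using $v(a_0)=0$ and $v(a_1)=1$, the left-hand side collapses to $\Delta_\rho v(a_1)$.

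Finally, by Lemma \ref{PeffDOF}, $\Delta_\rho v(a_1)=-\mathcal{P}_{eff}$. Substituting this into the identity above and multiplying both sides by $-1$ yields \eqref{cpOF}. The division by $2$ is legitimate because any ordered field has characteristic $0$, a fact already invoked in the proof of Green's formula.

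I do not anticipate any serious obstacle: the statement is purely algebraic once \eqref{VeqOmOF} and Lemma \ref{PeffDOF} are in hand. The mild point to watch is simply that the ordered-field setting forbids conjugation, but this is precisely why the quadratic form $(\nabla_{xy}v)^2$ (rather than a modulus squared) is the natural object here, and it is what Green's formula delivers when $f=g=v$.
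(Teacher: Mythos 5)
Your proposal is correct and follows exactly the paper's own argument: apply the Green's formula \eqref{VeqOmOF} with $f=g=v$, use harmonicity and the boundary values to reduce the sum to $-\Delta_\rho v(a_1)$, and conclude via Lemma \ref{PeffDOF}. No gaps; the remark about characteristic $0$ justifying division by $2$ is a nice touch already implicit in the paper's proof of Green's formula.
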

\begin{proof}
Applying \eqref {VeqOmOF} to the left hand side of \eqref{cpOF} we obtain
\begin{align*}
\frac{1}{2}\sum_{x,y\in V}(\nabla_{xy} v)^2\rho_{xy}&=-\sum_{x\in V}\Delta_\rho v(x)v(x)\\
&=-\sum_{x\in V\setminus\{a_0,a_1\}}\Delta_\rho v(x)v(x)-\Delta_\rho v(a_0)v(a_0)+\Delta_\rho v(a_1)v(a_1)\\
&=-\Delta_\rho v(a_1),
\end{align*}
since $v$ is the solution of \eqref{dirprOF}.
The statement \eqref {cpOF} is proved due to Lemma \ref{PeffDOF}.
\end{proof}

\begin{theorem}[Dirichlet/Thomson's principle]
Let $v$ be the solution of the Dirichlet problem \eqref{dirprOF} for the network $\Gamma$ over the ordered field $K$. Then
for any other function $f:V\rightarrow K$ such that $f(a_0)=0$ and $f(a_1)=1$, the following inequality  holds:
\begin{equation}\label{energy}
\frac{1}{2}\sum_{x,y\in V}(\nabla_{xy} v)^2\rho_{xy}\preceq\frac{1}{2}\sum_{x,y\in V}(\nabla_{xy} f)^2\rho_{xy}
\end{equation}
\end{theorem}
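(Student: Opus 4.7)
The plan is to use the standard ``expand the difference'' argument: write $f = v + u$ where $u = f - v$ vanishes on $\{a_0,a_1\}$, expand the energy of $f$, and show that the cross term vanishes while the remaining term is nonnegative in the ordered field.

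First I would set $u := f - v$, so $u(a_0) = u(a_1) = 0$, and compute
\begin{equation*}
\frac{1}{2}\sum_{x,y\in V}(\nabla_{xy} f)^2\rho_{xy} = \frac{1}{2}\sum_{x,y\in V}(\nabla_{xy} v)^2\rho_{xy} + \sum_{x,y\in V}(\nabla_{xy} v)(\nabla_{xy} u)\rho_{xy} + \frac{1}{2}\sum_{x,y\in V}(\nabla_{xy} u)^2\rho_{xy},
\end{equation*}
using the identity $(a+b)^2 = a^2 + 2ab + b^2$, which holds in any field.

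Next I would show the cross term vanishes. Applying Green's formula \eqref{VeqOmOF} to the pair $v,u$ yields
\begin{equation*}
\frac{1}{2}\sum_{x,y\in V}(\nabla_{xy} v)(\nabla_{xy} u)\rho_{xy} = -\sum_{x\in V}\Delta_\rho v(x)\,u(x).
\end{equation*}
Since $v$ solves \eqref{dirprOF}, $\Delta_\rho v(x) = 0$ for $x \in V\setminus\{a_0,a_1\}$, and $u(a_0) = u(a_1) = 0$ by construction; thus every term in the right-hand sum is zero, so the cross term is zero.

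Finally I would argue that the remaining term is $\succeq 0$. In any ordered field each square $(\nabla_{xy}u)^2$ is $\succeq 0$ (if $a\succ 0$ then $a^2\succ 0$ by closure of positives under multiplication; if $a\prec 0$ then $(-a)\succ 0$ and $a^2 = (-a)^2 \succ 0$; and $0^2 = 0$). Since also $\rho_{xy}\succeq 0$, we conclude
\begin{equation*}
\frac{1}{2}\sum_{x,y\in V}(\nabla_{xy} u)^2\rho_{xy} \succeq 0,
\end{equation*}
which combined with the previous two displays gives \eqref{energy}. There is no real obstacle here; the only subtlety is making sure that the ``squares are nonnegative'' argument is invoked explicitly, since unlike over $\mathbb{R}$ this requires the order axioms of $K$ rather than an analytic fact.
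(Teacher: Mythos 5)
Your proof is correct and follows essentially the same route as the paper's: decompose $f = v + (f-v)$, expand the square, kill the cross term with Green's formula \eqref{VeqOmOF} using that $f-v$ vanishes at $a_0,a_1$ and $v$ is harmonic elsewhere, and observe the remaining term is $\succeq 0$. Your explicit justification that squares are nonnegative in an ordered field is a welcome detail that the paper leaves implicit.
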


\begin{proof}
Let $g=f-v$. Then $g(a_0)=g(a_1)=0$. Therefore,

\begin{align*}
\begin{split}
\frac{1}{2}\sum_{x,y\in V}(\nabla_{xy} f)^2\rho_{xy}=&\frac{1}{2}\sum_{x,y\in V}(\nabla_{xy} (g+v))^2\rho_{xy}=\frac{1}{2}\sum_{x,y\in V}(\nabla_{xy}g+\nabla_{xy}v)^2\rho_{xy}\\
=&\frac{1}{2}\sum_{x,y\in V}((\nabla_{xy}g)^2+2(\nabla_{xy}g)(\nabla_{xy}v)+(\nabla_{xy}v)^2)\rho_{xy}\\
=&\frac{1}{2}\sum_{x,y\in V}(\nabla_{xy} v)^2\rho_{xy}+\frac{1}{2}\sum_{x,y\in V}(\nabla_{xy} g)^2\rho_{xy}+\sum_{x,y\in V}(\nabla_{xy} v)(\nabla_{xy} g)\rho_{xy},
\end{split}
\end{align*}
where the last term vanishes by Green's formula \eqref{GreenfOF}, since $g(a_0)=g(a_1)=0$ and $v$ is the solution of the Dirichlet problem\eqref{dirprOF} and the second term is greater then zero whenever $g\not\equiv 0$. Therefore, \eqref{energy} is proved and an equality is attained if and only if $f\equiv v$.
\end{proof}

\section{Comparison of two definitions of $Z_{eff}$}
Denote by $Z_{eff}^{(1)}(\lambda)$ the effective impedance defined in Section 2, that we from now on will consider as a function of $\lambda=i\omega$.

The effective impedance from Section 3 for the field $K=\Bbb R(\lambda)$ will be denoted by $Z_{eff}^{(2)}(\lambda)$. Note that it was already defined as a rational function of $\lambda$.

Of course, the arises question is whether 
\begin{equation}\label{equaZ}
Z_{eff}^{(1)}(\lambda)=Z_{eff}^{(2)}(\lambda)
\end{equation}
for all $\lambda$, such that $-i\lambda>0$.

The unique solution $v(\lambda)$ of the Dirichlet problem \eqref{dirprOF} can be found by Cramer's rule applied in the field $\Bbb R(\lambda)$. Note that $v(\lambda)(x)$ is a rational function on $\lambda$ for any $x\in V$ and, hence, it is continuous on $\lambda$. Therefore, $Z_{eff}^{(2)}(\lambda)$ is a continuous function on $\lambda$ (with values in $\Bbb C\cup\{\infty\}$).

By Cramer's rule, applied in $\Bbb C$, the function $Z_{eff}^{(1)}(\lambda)$ is also a rational function of $\lambda$ at all $\lambda$, where the determinant $D(\lambda)$ of the Dirichlet problem \eqref{dirpr} does not vanish. Moreover, at those $\lambda$, where $D(\lambda)\ne 0$ the equality \eqref{equaZ} is true by Cramer's rule.

By Theorem \ref{ExUniq}, $D(\lambda)$ vanishes only at finitely many values of $\lambda$, therefore, the identity \eqref{equaZ} will be true for all $-i\lambda>0$ if we know that $Z_{eff}^{(1)}(\lambda)$ is continuous in $\lambda$.

However, it is not obvious for those $\lambda$, where \eqref{dirpr} has multiple solutions or no solution.

The question should definitely be restricted just to the Dirichlet problem, which arises from electrical networks, and to the case of pure imaginary $\lambda$, as the following two examples show.

\begin{example} \label{nonPosW}
Consider the Dirichlet problem for the network at the figure below, where weights are shown on each edge. Note, that here the weight of the edge $ya_1$ is positive function, but it is \emph{not} in the form \eqref{rhol}.
\begin{figure}[H]
\centering
\begin{tikzpicture}[auto,node distance=2.5cm,
                    thick,main node/.style={circle,draw,font=\sffamily\Large\bfseries}]

  \node[main node] (1) {$a_0$};
  \node[main node] (2) [above right of=1] {$x$};
  \node[main node] (3) [below right of=1] {$y$};
  \node[main node] (4) [above right of=3] {$a_1$};

  \path[every node/.style={font=\sffamily\small}]
    (1) edge node [bend right] {${\lambda}$} (2)
    (2) edge node [bend right] {$\frac{1}{\lambda}$} (4)
    (3) edge node [bend right] {$1$} (1)
    (4) edge node [bend right] {$\lambda+\frac{1}{\lambda}-1=\frac{\lambda^2-\lambda+1}{\lambda}$} (3);

\end{tikzpicture}
\end{figure}

The Dirichlet problem for this network is 
\begin{equation*}
 \begin{cases}

(v(a_0)-v(x))\lambda+\frac{v(a_1)-v(x)}{\lambda}=0,

   \\
(v(a_0)-v(y))+(v(a_1)-v(y))(\lambda+\frac{1}{\lambda}-1)=0,\\

v(a_0)=0,\\
v(a_1)=1.
 \end{cases}
\end{equation*}
The determinant of this system is 
\begin{equation*}
D=D(\lambda)=\frac{1}{\lambda^2}(\lambda^2+1)^2
\end{equation*}
and its zeros are $i$ and $-i$. 

In case $D(\lambda)\ne 0$ the solution of the Dirichlet problem is
\begin{equation*}
v(x)=\frac{1}{\lambda^2+1}, v(y)=\frac{\lambda^2-\lambda+1}{\lambda^2+1}.
\end{equation*}
and it has no finite limit as $\lambda\rightarrow i$.

But the effective impedance in this case is $Z_{eff}^{(1)}(\lambda)=1$. Therefore, $Z_{eff}^{(2)}(\lambda)\equiv 1$

The Dirichlet problem in the case $\lambda= i$ is 
\begin{equation*}
 \begin{cases}

(v(a_0)-v(x))i-(v(a_1)-v(x))i=0,

   \\
(v(a_0)-v(y))-(v(a_1)-v(y))=0,\\

v(a_0)=0,\\
v(a_1)=1.
 \end{cases}
\end{equation*}
and, obviously, has no solutions.
Therefore, $Z_{eff}^{(1)}(i)=0$ by definition. Hence  $Z_{eff}^{(1)}(\lambda)$ is not continuos at the point $\lambda=i$ and $Z_{eff}^{(1)}(i)\ne Z_{eff}^{(2)}(i)$.

\end{example}

\begin{example} \label{ComplexOmega}
Consider the Dirichlet problem for the network at the figure below, where admittances are shown on each edge.
\begin{figure}[H]
\centering
\begin{tikzpicture}[auto,node distance=2.5cm,
                    thick,main node/.style={circle,draw,font=\sffamily\Large\bfseries}]

  \node[main node] (1) {$a_0$};
  \node[main node] (2) [right of=1]  {$y$};
  \node[main node] (3) [below of=2] {$z$};
  \node[main node] (4) [above of=2] {$x$};
  \node[main node] (5) [right of=2] {$a_1$};

  \path[every node/.style={font=\sffamily\small}]
    (1) edge node [bend right] {$\frac{1}{\lambda}$} (4)
    (1) edge node [bend right] {$\lambda$} (2)
    (4) edge node [bend right] {$1$} (2)
    (4) edge node [bend right] {$\lambda$} (5)
    (2) edge node [bend right] {$\frac{1}{\lambda}$} (5)
    (3) edge node [bend right] {$\lambda$} (1)
    (5) edge node [bend right] {$\frac{1}{\lambda}$} (3);

\end{tikzpicture}
\end{figure}

The Dirichlet problem for this network is 
\begin{equation*}
 \begin{cases}
\frac{(v(a_0)-v(x))}{\lambda}+(v(a_1)-v(x))\lambda+(v(y)-v(x))=0,
   \\
(v(a_0)-v(y))\lambda+\frac{(v(a_1)-v(y))}{\lambda}+(v(x)-v(y))=0,
\\
(v(a_0)-v(z))\lambda+\frac{(v(a_1)-v(z))}{\lambda}=0,\\

v(a_0)=0,\\
v(a_1)=1.
 \end{cases}
\end{equation*}
The determinant of this system is 
\begin{align*}
D=D(\lambda)=&-\left(\frac{1}{\lambda^2}+\frac{2}{\lambda}+2+2\lambda+\lambda^2\right)\left(\lambda+\frac{1}{\lambda}\right)\\
=&-\frac{1}{\lambda^3}(\lambda+1)^2(\lambda^2+1)^2,
\end{align*}
and it is easy to see, that $\lambda=\pm i$ and $\lambda=-1$ are its zeros. 

In case $D(\lambda)\ne 0$ the solution of the Dirichlet problem is
\begin{equation*}
v(\lambda)=(v(x),v(y),v(z),v(a_0),v(a_1))=\left(\frac{\lambda}{1+\lambda}, \frac{1}{1+\lambda},\frac{1}{1+\lambda^2},0,1\right).
\end{equation*}

The effective impedance in these cases is 
\begin{equation}\label{zeffex2}
Z_{eff}^{(1)}(\lambda)=Z_{eff}^{(2)}(\lambda)=\frac{\lambda^2+1}{\lambda^2+\lambda+1}.
\end{equation}

Note that the finite limit of $v$ does not exist when $\lambda$ goes to $i$ or $\lambda$ goes to $-1$.

The Dirichlet problem in the case $\lambda=i$ is
\begin{equation*}
 \begin{cases}
-(v(a_0)-v(x)){i}+(v(a_1)-v(x))i +(v(y)-v(x))=0,
   \\
(v(a_0)-v(y))i -(v(a_1)-v(y))i+(v(x)-v(y))=0,
\\
(v(a_0)-v(z))i -(v(a_1)-v(z))i=0,\\

v(a_0)=0,\\
v(a_1)=1.
 \end{cases}
\end{equation*}
and has no solution, which by definition of effective impedance implies $Z_{eff}^{(1)}(i)=0$. It is easy to see, that $Z_{eff}^{(2)}(i)=0$. Therefore  $Z_{eff}^{(1)}(i)=Z_{eff}^{(2)}(i)$ and $Z_{eff}^{(1)}(\lambda)$ is continuous at the point $\lambda=i$.

The Dirichlet problem in the case $\lambda=-1$ is
\begin{equation*}
 \begin{cases}
-(v(a_0)-v(x))-(v(a_1)-v(x))+(v(y)-v(x))=0,
   \\
-(v(a_0)-v(y))-{(v(a_1)-v(y))}+(v(x)-v(y))=0,
\\
-(v(a_0)-v(z))-{(v(a_1)-v(z))}=0,\\

v(a_0)=0,\\
v(a_1)=1.
 \end{cases}
\end{equation*}
and it has multiple solutions
\begin{equation*}
v=(v(x),v(y),v(z),v(a_0),v(a_1))=\left(\tau, 1-\tau,\frac{1}{2},0,1\right),\tau \in\Bbb C.
\end{equation*}
One can calculate, that, $Z_{eff}^{(1)}(-1)=-\frac{2}{3}$. But from \eqref{zeffex2} follows, that $Z_{eff}^{(1)}(-1)=2$. Therefore, \eqref{equaZ} fails at the point $\lambda=-1$ and $Z_{eff}^{(1)}(\lambda)$ is not continuous at this point.

\end{example} 

\section*{Acknowledgement}
The author thanks her scientific advisor, Professor Alexander Grigor'yan, for helpful discussions on the topic.

\newpage

\addcontentsline{toc}{section}{Bibliography}

\bigskip

\end{document}